\newtheorem{lem}{Lemma}[section]
\newtheorem{theorem}{Theorem}[section]
\newtheorem{coro}{Corollary}[section]
\newcommand{\ppp}{\partial}
\newcommand{\ooo}{\overline}
\newcommand{\OOO}{\Omega}
\renewenvironment{abstract}{%
        \small
        \quotation
         \noindent {\bfseries \abstractname } }%
      {\if@twocolumn\else\endquotation\fi}
\title{\bf Unique continuation principle for the one-dimensional 
time-fractional diffusion equation
}
\author{
Zhiyuan Li$^\dag$ ,\quad Masahiro Yamamoto$^\ddag$ 
}
\date{}
\begin{document}
\maketitle
\renewcommand{\thefootnote}{\fnsymbol{footnote}}
\footnotetext{\hspace*{-5mm} 
\begin{tabular}{@{}r@{}p{14cm}@{}} 
& Manuscript last updated: \today.
\\
$^\dag$ 
& School of Mathematics and Statistics, Shandong University of Technology, 
Zibo, Shandong 255049, China.
E-mail: zyli@sdut.edu.cn\\
$^\ddag$ 
& Graduate School of Mathematical Sciences, 
the University of Tokyo,
3-8-1 Komaba, Meguro-ku, Tokyo 153-8914, Japan. 
E-mail: myama@ms.u-tokyo.ac.jp.
\end{tabular}}

\begin{abstract}
This paper deals with the unique continuation of solutions for 
a one-dimensional anomalous diffusion equation with Caputo derivative of order 
$\alpha\in(0,1)$. Firstly, the
uniqueness of solutions to a lateral Cauchy problem for the anomalous 
diffusion equation is given via the Theta function method, from which we 
further verify the unique continuation principle.
\end{abstract}

\section{Introduction and main result}
\label{sec-intro-distri}

The anomalous diffusion processes whose mean square displacement behaves like 
$\langle\Delta x^2\rangle\sim C_\alpha t^\alpha$ as $t\to\infty$ were found in 
many problems in the fields of science and engineering. For the qualitative 
analysis of these anomalous diffusion, a macro-model based on the continuous 
time random walk, which is called a time-fractional diffusion equation,
is derived:
\begin{equation}
\label{equ-gov}
\partial_t^\alpha u(x,t) - \partial_x^2u(x,t) = 0,\quad (x,t)\in(0,1)\times
(0,T),
\end{equation}
with the Caputo derivative $\partial_t^\alpha$ $(0<\alpha<1)$ which is 
usually defined by
$$
\partial_t^\alpha \phi(t) 
= \frac{1}{\Gamma(1-\alpha)} \int_0^t \frac{\phi'(\tau)}{(t-\tau)^\alpha}d\tau,
\quad t>0,
$$
where $\Gamma(\cdot)$ is a usual Gamma function. For various properties of the 
Caputo derivative, we refer to Kilbas, Srivastava and Trujillo \cite{K06}, 
Podlubny \cite{P99} and the references therein.

The fractional diffusion models have received great attention  
in applied disciplines, e.g., in describing some anomalous phenomena including 
the non-Fickian growth rates, skewness and long-tailed profile 
which are poorly characterized by the classical diffusion 
equations (see e.g., Benson, Wheatcraft and Meerschaert \cite{B00}, 
Levy and Berkowitz \cite{LB03} and the references therein). 
In contrast to the success in the practice, theoretical researches related to 
the fractional diffusion equation are still under development. The Caputo 
derivative is inherently nonlocal in time with a history dependence, and
there are crucial differences between fractional models and classical models 
(i.e., $\alpha=1$), for example, concerning long-time asymptotic behavior 
(see, e.g., Li, Luchko and Yamamoto \cite{LLY14} and Li, Liu and Yamamoto 
\cite{LLY15}). There are also 
some publications on some important properties. 
For example, a maximum principle in the usual setting still holds 
similarly to the parabolic equation (see, e.g., Luchko \cite{L09}). 
As is known, the unique continuation property (UCP) is one of remarkable 
properties of parabolic equations, which asserts that if a solution 
to a homogeneous equation vanishes in an open subset, then the solution is identically 
zero in the whole domain (see, e.g., Saut and Scheurer \cite{SS87}). 
For the time-fractional diffusion equation, there are not affirmative answers 
except for some special cases. For the special half-order fractional diffusion 
equation (i.e., $\alpha=\frac12$), under the assumption that the initial-value 
vanishes, in Xu, Cheng and Yamamoto \cite{XCY11} for the one-dimensional case,
and Cheng, Lin and Nakamura \cite{CLN13} for the two-dimensional case, 
the uniqueness results are
proved: if a solution $u$ to a homogeneous fractional diffusion equation 
satisfies $u=0$ in $\omega\times (0,T)$ and $u(\cdot,0) =0$ in $\OOO$
with some subdomain $\omega \subset \OOO$, then $u=0$ in $\OOO\times (0,T)$.
Here $\OOO \subset \mathbb R^d$ is a spatial domain where we are considering 
the fractional diffusion equation.
The proof is done via Carleman estimates for the operator 
$\partial_t-\Delta^2$. For a general fractional order $\alpha \in (0,1)$, 
a recent work Lin and Nakamura \cite{LN16} obtained a similar 
uniqueness result with the zero initial condition by using a newly established 
Carleman estimate based on calculus of pseudo-differential operators. Sakamoto 
and Yamamoto \cite{SY11} showed that for a solution of the time-fractional 
diffusion equations with the homogeneous Dirichlet boundary condition on 
the whole boundary, if the Neumann data vanish on arbitrary subboundary, 
then it vanishes identically.  The paper Jiang, Li, Liu and Yamamoto 
\cite{JLLY} generalized the result in \cite{SY11} to 
the multi-term case. Recently, for the multi-term case with the first order 
time-derivative, the UCP was established by Li, Huang and Yamamoto 
\cite{LHY17} 
via a Carleman type estimate for the parabolic equation. All these results 
should be considered as a weak type of uniqueness because the homogeneous 
boundary condition is imposed on the whole 
boundary (\cite{JLLY} and \cite{SY11}) or 
on the initial value (\cite{CLN13}, \cite{LHY17}, \cite{LN16} and 
\cite{XCY11}).

In this paper, we will show that the classical unique continuation property 
for solutions of \eqref{equ-gov} is valid. More precisely, we have the 
following main 
theorem.
\begin{theorem}
\label{theorem-unique}
Let $T>0$ be fixed constant and $u\in L^\infty(0,T;H^2(0,1))$ be a solution to 
the fractional diffusion equation \eqref{equ-gov}. Then we have
$$
u(x,t)=0,\quad (x,t)\in[0,1]\times[0,T]
$$
provided that $u\equiv0$ in $I\times[0,T]$, where $I$ is a non-empty open 
subinterval of $(0,1)$. 
\end{theorem}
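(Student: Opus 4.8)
The plan is to reduce the unique continuation statement to a uniqueness theorem for a lateral Cauchy problem, and to attack that via the Laplace transform together with the Theta function method. First I would fix a point $x_0\in I$. Since $u\equiv 0$ on the open set $I\times[0,T]$, every $x$-derivative of $u$ vanishes there, so in particular the Cauchy data $u(x_0,t)=0$ and $\partial_x u(x_0,t)=0$ hold for all $t\in[0,T]$, and the initial value $u(\cdot,0)$ vanishes on $I$. It therefore suffices to prove that these homogeneous lateral Cauchy data, together with \eqref{equ-gov}, force $u\equiv 0$ on $(x_0,1)\times[0,T]$, and symmetrically on $(0,x_0)\times[0,T]$. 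Splitting $(0,1)$ at $x_0$ and treating the two half-intervals in the same way then yields $u\equiv0$ on all of $[0,1]\times[0,T]$.

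For the lateral Cauchy problem I would apply the Laplace transform in time. Writing $U(x,s)=\int_0^\infty e^{-st}u(x,t)\,dt$ and $a(x)=u(x,0)$, the rule $\mathcal{L}[\partial_t^\alpha u]=s^\alpha U-s^{\alpha-1}a$ turns \eqref{equ-gov} into the spatial ordinary differential equation
\[
\partial_x^2 U(x,s)-s^\alpha U(x,s)=-s^{\alpha-1}a(x),
\]
subject to the homogeneous conditions $U(x_0,s)=\partial_x U(x_0,s)=0$ coming from the vanishing lateral data. Solving this second-order linear equation by variation of parameters (the homogeneous solutions being $e^{\pm s^{\alpha/2}x}$) gives the explicit representation
\[
U(x,s)=-s^{\alpha/2-1}\int_{x_0}^x \sinh\!\bigl(s^{\alpha/2}(x-\xi)\bigr)\,a(\xi)\,d\xi .
\]
Because $a\equiv 0$ on $I$, this is consistent with $U\equiv0$ on $I$, and the whole solution in the half-interval is now pinned down by the single unknown profile $a$.

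The Theta function enters at the inversion stage. Inverting the Laplace transform and interchanging integrals represents $u$ as a time-convolution $u(x,t)=-\int_{x_0}^x a(\xi)\,K(x-\xi,t)\,d\xi$, where the kernel $K$ is the inverse transform of $s^{\alpha/2-1}\sinh(s^{\alpha/2}\eta)$, a Wright/Mittag-Leffler type special function, i.e. the Theta function attached to the fractional operator. Its analyticity and growth are exactly what is needed to show that this Volterra-type operator is injective: since $U(x,s)$ is controlled by the boundedness of $u$ while the $\sinh$-kernel grows exponentially in $s^{\alpha/2}$, a Phragm\'en--Lindel\"of / exponential-type argument applied, for each fixed $x\in(x_0,1)$, to the entire function $\lambda\mapsto\int_{x_0}^x\sinh(\lambda(x-\xi))a(\xi)\,d\xi$ forces $a$ to vanish on $(x_0,x)$; letting $x\uparrow 1$ gives $a\equiv0$ on $(x_0,1)$. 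Feeding $a\equiv0$ back into the representation yields $U\equiv0$, hence $u\equiv0$ on $(x_0,1)\times[0,T]$, and the symmetric argument completes the proof.

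The main obstacle is precisely the nonlocality of the Caputo derivative. It surfaces as the source term $s^{\alpha-1}a$ in the transformed equation, which couples the unknown initial trace $u(\cdot,0)$ into the analysis and blocks a naive ODE-in-$x$ solve; and, because the lateral Cauchy problem is ill-posed, the kernel $K$ is highly singular, so the inversion and the injectivity step must be made rigorous through sharp analytic estimates on the Theta function. A secondary technical point is justifying the transform itself, namely continuing $u$ (or its analytic continuation in $t$) beyond $t=T$ so that the transformed equation holds and $U$ enjoys the decay used in the Phragm\'en--Lindel\"of step.
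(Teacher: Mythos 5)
Your overall strategy --- reducing to a lateral Cauchy problem, Laplace transforming in time, and using a Phragm\'en--Lindel\"of argument to kill the initial trace --- is the same route the paper takes, and your variation-of-parameters formula for $U(x,s)$ is a legitimate repackaging of the paper's Theta-function representation. But there is a genuine gap in what you dismiss as ``a secondary technical point'': the Laplace transform itself. The rule $\mathcal{L}[\partial_t^\alpha u]=s^\alpha U-s^{\alpha-1}a$ and the bound you invoke on $U(x,s)$ both require $u$ to be defined, to satisfy the equation, and to have controlled growth on all of $(0,\infty)$, whereas $u$ is only given on $(0,T)$; over a finite time interval the Caputo transform rule simply does not hold. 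Moreover, because the Caputo derivative carries memory from $t=0$, you cannot extend $u$ by restarting the equation at $t=T$. The paper's entire mechanism --- extending the Neumann data $g=u_x(1,\cdot)$ beyond $T$, re-solving the forward initial-boundary value problem \eqref{equ-IBVP} from time $0$ via the Theta function, and proving the growth estimate of Lemma \ref{lem-u} --- exists precisely to manufacture such an extension with quantitative control. This is the main technical content of the proof, not a side issue.

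A consequence you then have to confront: for any such extension $\widetilde u$, the Cauchy data at $x_0$ vanish only on $(0,T)$, not for all $t>0$. So your boundary conditions $U(x_0,s)=\partial_x U(x_0,s)=0$ are false; they must be replaced by terms of the form $\int_T^\infty e^{-st}\widetilde u(x_0,t)\,dt$, which in your representation get multiplied by the exponentially growing kernels $\cosh\bigl(s^{\alpha/2}(x-x_0)\bigr)$ and $\sinh\bigl(s^{\alpha/2}(x-x_0)\bigr)$. Showing that these products still decay --- because $e^{s^{\alpha/2}}e^{-sT}\to 0$ as $s\to\infty$, which is where $\alpha<1$ and the growth estimate enter; this is exactly the paper's bound $|I_1(s)|\le Ce^{-C_1s}$ --- is what rescues the Phragm\'en--Lindel\"of step. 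The same defect breaks your final step: with $a\equiv 0$ you do not get $U\equiv 0$ exactly, only $U$ equal to these correction terms, so ``hence $u\equiv 0$'' does not follow directly. The paper closes this differently: once $u_0\equiv 0$, the remaining identity is a convolution equation in $t$, the Titchmarsh convolution theorem combined with the $t$-analyticity of $D_t^{1-\alpha}\theta_\alpha(1,\cdot)$ forces $g\equiv 0$ on $(0,T)$, and then uniqueness for the well-posed forward problem (as in \cite{SY11}) yields $u\equiv 0$. Your proposal needs an analogous closing argument, and as written it does not have one.
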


In the theorem, we consider a class of solutions satisfying
$u \in L^{\infty}(0,T;H^2(0,1))$ and $\ppp_t^{\alpha}u 
\in L^{\infty}(0,T;L^2(0,1))$.  In the case where $u(0,t) = u(1,t) = 0$
for $t>0$ and $u(\cdot,0) \in H^1_0(0,1) \cap H^2(0,1)$, we can prove
that $u \in L^{\infty}(0,T;H^2(0,1))$ (e.g., \cite{SY11}).

This theorem is exactly corresponding to the unique continuation in the
case of $\alpha=1$, not assuming the zero initial condition like  
\cite{CLN13}, \cite{LN16}, \cite{XCY11}, but we do not know if the same 
unique continuation holds for general dimensions or equations with 
variable coefficients even in the one dimension.

\section{Proof of Theorem \ref{theorem-unique}}
In this section, we will set up notations and terminologies, review some of 
standard facts on the fractional calculus, and introduce a lateral Cauchy 
problem for the time-fractional diffusion equation. Preparing all the 
necessary properties of the solution of the Cauchy problem, we finally give 
the proof of the main result.

Besides the Caputo derivatives, we also use the Riemann-Liouville fractional 
derivatives which is defined as follows
$$
D_t^\alpha \phi(t)
:=\frac1{\Gamma(1-\alpha)} 
\frac{d}{dt} \int_0^t \frac{\phi(\tau)}{(t-\tau)^\alpha} d\tau,\quad 
t>0,\ \alpha\in(0,1). 
$$

Let $\delta(x)$ be the Dirac delta fucntion and let $K_\alpha(x,t)$ be 
the fundamental solution of the following free space 
time-fractional diffusion equation
\begin{equation}
\label{equ-free}
\left\{
\begin{alignedat}{2}
&\partial_t^\alpha u - \partial_x^2u = 0,
&\quad& x\in\mathbb R,\ t>0,
\\
&u(x,0) = \delta(x), &\quad& x\in\mathbb R,
\end{alignedat}
\right.
\end{equation}
which has the form from Luchko and Zuo \cite{LZ14} and Rundell, Xu and Zuo 
\cite{R13}
$$
K_\alpha(x,t) 
= \tfrac12 t^{-\frac{\alpha}2} M_{\frac{\alpha}2}(|x|t^{-\frac{\alpha}{2}}),
\quad x\in\mathbb R,\ t>0,
$$
where $M_\alpha$ is a special member of the family of the Wright functions 
(see, e.g., Mainardi, Luchko and Pagnini \cite{MLP01} and \cite{LZ14}) 
which is defined by
$$
M_\alpha(t)=\sum_{k=0}^\infty \frac{(-t)^k}{k!\Gamma(-\alpha k + (1-\alpha))}
$$
and its Laplace transform with respect to the time $t$ has the form
\begin{equation}
\label{equ-Lap_K}
\mathcal{L}\{K_\alpha(x,t);s\}
=\tfrac12 s^{\frac{\alpha}2-1} e^{-|x|s^{\frac{\alpha}2}},
\quad s>0,\ x\in\mathbb R.
\end{equation}
Moreover, based on the fundamental solution $K_\alpha$, we introduce an 
important special function named Theta function $\theta_\alpha(x,t)$, 
$\alpha>0$ which plays an important role in representing the solution to 
\eqref{equ-gov} with non-homogeneous boundary conditions. 
We consider the Theta function $\theta_\alpha$ 
by the following form of series:
$$
\theta_\alpha(x,t) := \sum_{m=-\infty}^\infty K_\alpha(x+2m,t),\quad t>0.
$$
We now list some properties of the function $\theta_\alpha$ which can be found 
in Eidelman and Kochubei \cite{EK03}, Luchko and Zuo \cite{LZ14} and Rundell, 
Xu and Zuo \cite{R13}.
\begin{lem}
\label{lem-theta}
The functions $K_\alpha(x,t)$ and $\theta_{\alpha}(x,t)$ are 
even with respect to $x$, and $\theta_\alpha(1,t) = \theta_\alpha(-1,t)$ 
is $C^\infty$ on $[0,\infty)$ and 
$$
\frac{d^m\theta_\alpha(1,0)}{dt^m}=0,\quad m=0,1,\cdots.
$$
Moreover, $\theta_\alpha$ satisfies the following estimates.
\begin{itemize}
\item[$(a)$] If $|x|^2\ge t^\alpha>0$, then there exist constants $C>0$ and 
$\sigma>0$ depending on $\alpha$ such that
\begin{equation}
\label{esti-theta}
|K_\alpha(x,t)| \le 
Ct^{-\frac{\alpha}2} e^{-\sigma t^{-\frac{\alpha}{2-\alpha}}|x|^{\frac2{2-\alpha}}},
\end{equation}
and
\begin{equation}
\label{esti-theta'}
|D_t^{1-\alpha} K_\alpha (x,t)| \le 
Ct^{\frac{\alpha}2-1}e^{-\sigma t^{-\frac{\alpha}{2-\alpha}}|x|^{\frac2{2-\alpha}}}.
\end{equation}
\item[$(b)$]
If $0<|x|^2\le t^\alpha$, then there exists a constant $C>0$ depending on $\alpha$ 
such that
\begin{equation}
\label{esti-theta<1}
|K_\alpha(x,t)| \le C t^{-\frac{\alpha}2},
\end{equation}
and
\begin{equation}
\label{esti-theta'<1}
|D_t^{1-\alpha} K_\alpha(x,t)| \le Ct^{\frac{\alpha}2 - 1 }.
\end{equation}
\end{itemize}
\end{lem}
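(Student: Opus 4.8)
The plan is to dispatch the symmetry statements first, then extract the self-similar structure of the kernel, and finally reduce both the estimates and the smoothness/flatness at $t=0$ to the large-argument behaviour of the Wright function.

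\textbf{Symmetry.} Since $K_\alpha(x,t)=\tfrac12 t^{-\alpha/2}M_{\alpha/2}(|x|t^{-\alpha/2})$ depends on $x$ only through $|x|$, it is even in $x$. For $\theta_\alpha$ I would reindex the defining series by $m\mapsto-m$ and use evenness of $K_\alpha$: this gives $\theta_\alpha(-x,t)=\sum_m K_\alpha(-x+2m,t)=\sum_m K_\alpha(x-2m,t)=\theta_\alpha(x,t)$, so $\theta_\alpha$ is even and in particular $\theta_\alpha(1,t)=\theta_\alpha(-1,t)$.

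\textbf{Self-similar representation and the two estimates.} Both kernels are self-similar. From \eqref{equ-Lap_K} and the Laplace rule for the Riemann--Liouville derivative (the initial term drops, since $K_\alpha(x,\cdot)$ and its order-$\alpha$ fractional integral tend to $0$ as $t\to0^+$ for $x\neq0$) one gets
$$
\mathcal{L}\{K_\alpha(x,\cdot);s\}=\tfrac12 s^{\frac{\alpha}{2}-1}e^{-|x|s^{\alpha/2}},\qquad
\mathcal{L}\{D_t^{1-\alpha}K_\alpha(x,\cdot);s\}=s^{1-\alpha}\,\mathcal{L}\{K_\alpha(x,\cdot);s\}=\tfrac12 s^{-\frac{\alpha}{2}}e^{-|x|s^{\alpha/2}}.
$$
Hence each inverts to a profile depending on the self-similar variable $r=|x|t^{-\alpha/2}$ times a power of $t$, the powers being $t^{-\alpha/2}$ and $t^{\alpha/2-1}$ respectively; the two regimes $|x|^2\ge t^\alpha$ and $|x|^2\le t^\alpha$ are exactly $r\ge1$ and $r\le1$. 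For $r\le1$ the profile is continuous, hence bounded on $[0,1]$, which yields \eqref{esti-theta<1} and \eqref{esti-theta'<1}. For $r\ge1$ I would invoke the large-argument asymptotics $M_\nu(r)\sim C\,r^{a}\exp(-b\,r^{1/(1-\nu)})$ with $\nu=\alpha/2$, so that $r^{1/(1-\nu)}=r^{2/(2-\alpha)}=|x|^{2/(2-\alpha)}t^{-\alpha/(2-\alpha)}$; absorbing the algebraic factor into the exponential constant gives \eqref{esti-theta} and, from the companion profile for the fractional-derivative kernel, \eqref{esti-theta'}. Equivalently one may invert the Bromwich integral directly and locate the saddle of the phase $st-|x|s^{\alpha/2}$ at $s_\ast\sim(|x|/t)^{2/(2-\alpha)}$: both $s_\ast t$ and $|x|s_\ast^{\alpha/2}$ scale like $t^{-\alpha/(2-\alpha)}|x|^{2/(2-\alpha)}$, producing the same exponential rate for the two kernels and differing only in the prefactor power of $t$.

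\textbf{Smoothness and flatness at $t=0$.} To differentiate $\theta_\alpha(1,t)=\sum_{m}K_\alpha(1+2m,t)$ term by term I need bounds on the integer time-derivatives, and the same self-similar computation yields $|\ppp_t^k K_\alpha(x,t)|\le C_k\,t^{-\alpha/2-k}\exp(-\sigma t^{-\alpha/(2-\alpha)}|x|^{2/(2-\alpha)})$ whenever $|x|^2\ge t^\alpha$. At $x=1+2m$ one always has $|1+2m|\ge1$, so for $t$ in a bounded interval the factor $\exp(-\sigma|1+2m|^{2/(2-\alpha)}t^{-\alpha/(2-\alpha)})$ decays rapidly in $m$, giving locally uniform convergence of every differentiated series and hence $\theta_\alpha(1,\cdot)\in C^\infty(0,\infty)$. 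As $t\to0^+$ each term carries $t^{-\alpha/2-k}\exp(-\sigma|1+2m|^{2/(2-\alpha)}t^{-\alpha/(2-\alpha)})$, and since the exponential beats every negative power of $t$, each $\frac{d^k}{dt^k}\theta_\alpha(1,t)\to0$; this both extends $\theta_\alpha(1,\cdot)$ smoothly to $t=0$ and gives $\frac{d^k\theta_\alpha(1,0)}{dt^k}=0$ for all $k$.

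\textbf{Main obstacle.} The genuine difficulty is the exponential estimate in the regime $r\ge1$; everything else is bookkeeping once the profiles are known to decay like $\exp(-b\,r^{2/(2-\alpha)})$. Establishing this decay uniformly in $x$ and $t$, and for the fractional-derivative kernel as well — whether through the sharp large-argument asymptotics of the Wright function or through a careful steepest-descent analysis of the Bromwich integral — is where the real work lies, and I would lean on \cite{EK03}, \cite{LZ14} and \cite{R13} for the precise Wright-function asymptotics rather than re-derive them.
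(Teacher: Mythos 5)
The paper offers no proof of this lemma at all: it is stated as a list of known properties, with the reader sent to Eidelman--Kochubei \cite{EK03}, Luchko--Zuo \cite{LZ14} and Rundell--Xu--Zuo \cite{R13}. So your proposal cannot be matched against an argument in the text; judged as a reconstruction of the cited results, it is correct and follows the standard route. The evenness argument via $m\mapsto-m$ is fine. Your key structural step --- that \eqref{equ-Lap_K} together with $\mathcal L\{D_t^{1-\alpha}K_\alpha(x,\cdot);s\}=s^{1-\alpha}\mathcal L\{K_\alpha(x,\cdot);s\}$ (the initial-value term vanishing for $x\ne0$) forces both kernels to be self-similar, $K_\alpha=t^{-\alpha/2}\Phi(r)$ and $D_t^{1-\alpha}K_\alpha=t^{\alpha/2-1}\Psi(r)$ with $r=|x|t^{-\alpha/2}$ --- is exactly what makes cases $(a)$ and $(b)$ the regimes $r\ge1$ and $r\le1$, and your exponent bookkeeping $r^{1/(1-\alpha/2)}=|x|^{2/(2-\alpha)}t^{-\alpha/(2-\alpha)}$ reproduces \eqref{esti-theta}--\eqref{esti-theta'<1}. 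What your route buys over the paper's bare citation is an explicit proof skeleton in which the only imported ingredient is the large-argument decay of the Wright-type profiles; that is precisely the ingredient the paper imports wholesale, so you are deferring strictly less than the paper does.

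Two points need tightening. First, for the $C^\infty$ statement you differentiate the series $\sum_m K_\alpha(1+2m,t)$ term by term, and your claimed bound on $\partial_t^k K_\alpha$ requires exponential decay of the derivatives $M_{\alpha/2}^{(j)}(r)$, which does \emph{not} follow from decay of $M_{\alpha/2}$ on the positive real axis alone; you should either invoke the Wright asymptotics in a complex sector (Cauchy estimates on disks around $r$ then give the derivative decay) or differentiate the Bromwich representation in $t$ and redo the contour/saddle estimate --- both devices are available in \cite{EK03}. Second, proving that every $t$-derivative of $\theta_\alpha(1,\cdot)$ tends to $0$ as $t\to0^+$ yields the $C^\infty[0,\infty)$ extension with $\frac{d^m\theta_\alpha(1,0)}{dt^m}=0$ only after the standard remark that a function in $C^\infty(0,\infty)$ all of whose derivatives have limit $0$ at $t=0^+$ extends smoothly with flat derivatives (mean value theorem); add that sentence to close the argument.
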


Now let us turn to considering the following lateral Cauchy problem
\begin{equation}
\label{equ-cauchy}
\left\{
\begin{alignedat}{2}
&\partial_t^\alpha u - \partial_x^2u = 0
&\quad& \mbox{in $(0,1)\times(0,T]$,}
\\
&u(0,\cdot)= u_x(0,\cdot) = 0 &\quad& \mbox{in $[0,T]$}.
\end{alignedat}
\right.
\end{equation}
Assuming that $u\in L^\infty(0,T;H^2(0,1))$ satisfies (\ref{equ-cauchy}),
we will now focus on the representation of the solution to \eqref{equ-cauchy}, 
as this 
will be essential to our approach. For this, we first set $u_0(x):=u(x,0)$ and 
$g(t):=u_x(1,t)$. We extend the function $g$ to the interval $[0,\infty)$ by 
letting $g=0$ outside of $(0,T+1)$ and letting 
$g(t) = g(T)(T+1-t)$ if $t\in(T,T+1)$, 
and by $\widetilde g$ we denote the extension, and by $\widetilde u$ we denote
the solution to the following auxiliary system
\begin{equation}
\label{equ-IBVP}
\left\{
\begin{alignedat}{2}
&\partial_t^\alpha \widetilde u - \partial_x^2 \widetilde u = 0
&\quad& \mbox{in $(0,1)\times(0,\infty)$,}
\\
&\widetilde u(\cdot,0)=u_0, &\quad& \mbox{in $(0,1)$,}
\\
&\widetilde u_x(0,\cdot) = 0,\  
\widetilde u_x(1,\cdot)=\widetilde g&\quad& \mbox{in $(0,\infty)$.}
\end{alignedat}
\right.
\end{equation}
On the basis of the properties of the fundamental solution $K_\alpha$ and the 
Theta function $\theta_\alpha$ in Lemma \ref{lem-theta}, a representation 
formula of the solution to \eqref{equ-IBVP} can be obtained. We have
\begin{lem}
\label{lem-u}
Assume $u_0\in C[0,1]$ and $\widetilde g$ be a continuous function.
Then the solution $\widetilde u$ of the initial-boundary value problem 
\eqref{equ-IBVP} has the form
\begin{equation}
\label{equ-u}
\widetilde u(x,t)=w(x,t)+v(x,t),\quad (x,t)\in(0,1)\times(0,\infty),
\end{equation}
where 
\begin{align}
w(x,t)&=\int_0^1 (\theta_\alpha(x-\xi,t) + \theta_\alpha(x+\xi,t)) 
u_0(\xi)d\xi,
\label{sol-w}
\\
v(x,t)&=2\int_0^t(D_t^{1-\alpha}\theta_\alpha)(x-1,t-\tau)\widetilde g(\tau)
d\tau,
\quad (x,t)\in(0,1)\times(0,\infty). 
\label{sol-v1} 
\end{align}
Moreover, the following estimate 
$$
|\widetilde u(0,t)|\le 
C\left( t^{-\frac{\alpha}2} + t^{\frac{\alpha}2} + t^{\frac{3\alpha}2 } 
+ t^{\frac{\alpha^2}{2-\alpha} }  + t^{\frac{2\alpha-2}{2-\alpha} }\right),
\quad t>0.
$$
holds true for $t>0$.
\end{lem}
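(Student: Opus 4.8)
The natural tool here is the Laplace transform in time, which turns the nonlocal time-fractional operator into an algebraic factor. Writing $\hat f(s)=\mathcal L\{f(t);s\}$ and using the Caputo rule $\mathcal L\{\partial_t^\alpha\widetilde u;s\}=s^\alpha\hat{\widetilde u}-s^{\alpha-1}u_0$, the problem \eqref{equ-IBVP} transforms into the parameter-dependent two-point boundary value problem
\begin{equation*}
\partial_x^2\hat{\widetilde u}(x,s)-s^\alpha\hat{\widetilde u}(x,s)=-s^{\alpha-1}u_0(x),\qquad \partial_x\hat{\widetilde u}(0,s)=0,\quad \partial_x\hat{\widetilde u}(1,s)=\hat{\widetilde g}(s).
\end{equation*}
This linear ODE has an explicit Green's function built from $e^{\pm s^{\alpha/2}x}$, so its solution splits into one term carrying the source $u_0$ and one carrying the boundary datum $\hat{\widetilde g}$. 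The plan is to sum the defining series for $\theta_\alpha$ in the Laplace variable: by \eqref{equ-Lap_K}, $\mathcal L\{\theta_\alpha(x,t);s\}=\tfrac12 s^{\alpha/2-1}\sum_{m}e^{-|x+2m|s^{\alpha/2}}$, a geometric-type sum that collapses for $x\in(0,1)$ to $\tfrac12 s^{\alpha/2-1}\cosh((1-x)s^{\alpha/2})/\sinh(s^{\alpha/2})$, i.e.\ exactly the Neumann Green's kernel of the transformed operator. Comparing, I expect the source term to match $\hat w$ and the boundary term to match $\hat v=2s^{1-\alpha}\hat\theta_\alpha(x-1,s)\hat{\widetilde g}(s)$, where the factor $s^{1-\alpha}$ is the Laplace symbol of $D_t^{1-\alpha}$ and the convolution theorem reproduces \eqref{sol-v1}. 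Uniqueness of the Laplace transform then yields \eqref{equ-u}--\eqref{sol-v1}.

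Equivalently the representation can be checked directly: each translate $K_\alpha(\cdot+2m,t)$ solves \eqref{equ-free}, hence $\theta_\alpha$, $w$ and $v$ solve the equation, and since $\theta_\alpha$ is even and $2$-periodic in $x$ (so $\partial_x\theta_\alpha$ is odd and $2$-periodic) the kernel $\theta_\alpha(x-\xi,t)+\theta_\alpha(x+\xi,t)$ has vanishing $x$-derivative at both $x=0$ and $x=1$, giving the homogeneous Neumann conditions for $w$, while $w(\cdot,0)=u_0$ follows from $\theta_\alpha(\cdot,0^+)$ acting as a periodized point mass. For $v$ the initial value vanishes, $\partial_x v(0,t)=0$ because $\partial_x\theta_\alpha(-1,t)=0$ by oddness and periodicity, and the identity $\partial_x v(1,t)=\widetilde g(t)$ is the boundary trace relation for this single-layer-type potential---automatic in the Laplace picture but, argued directly, resting on the fact from Lemma \ref{lem-theta} that $\theta_\alpha(1,t)$ and all its $t$-derivatives vanish at $t=0$.

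For the bound on $\widetilde u(0,t)$, set $x=0$: by evenness $w(0,t)=2\int_0^1\theta_\alpha(\xi,t)u_0(\xi)\,d\xi$ and $v(0,t)=2\int_0^t (D_t^{1-\alpha}\theta_\alpha)(-1,t-\tau)\widetilde g(\tau)\,d\tau$. I would estimate $\theta_\alpha(\xi,t)=\sum_m K_\alpha(\xi+2m,t)$ termwise via Lemma \ref{lem-theta}, splitting each index according to whether $|\xi+2m|^2\le t^\alpha$ or $|\xi+2m|^2\ge t^\alpha$: the finitely many ``near'' indices contribute the flat bound \eqref{esti-theta<1} of size $t^{-\alpha/2}$, while for the ``far'' indices the stretched-exponential bound \eqref{esti-theta} makes the sum $\sum_m e^{-\sigma t^{-\alpha/(2-\alpha)}|\xi+2m|^{2/(2-\alpha)}}$ convergent and, after integrating in $\xi$ and balancing the two regimes in $t$, yields the fractional powers $t^{\alpha/2}$, $t^{3\alpha/2}$ and $t^{\alpha^2/(2-\alpha)}$. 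The term $v(0,t)$ is treated identically using \eqref{esti-theta'} and \eqref{esti-theta'<1} in place of \eqref{esti-theta} and \eqref{esti-theta<1}, together with the boundedness of $\widetilde g$ and the time convolution, producing the last power $t^{(2\alpha-2)/(2-\alpha)}$; summing all contributions gives the claimed inequality. I expect the main obstacle to be exactly this bookkeeping---controlling the infinite image series uniformly across the regime split $|x|^2\le t^\alpha$ versus $|x|^2\ge t^\alpha$ and combining it with the Riemann--Liouville convolution---so that every borderline case collapses into one of the five explicit powers of $t$.
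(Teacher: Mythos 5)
Your proposal is correct in substance, but its two halves relate to the paper quite differently. For the bound on $\widetilde u(0,t)$ you do essentially what the paper does: use evenness to write $w(0,t)=2\int_0^1\theta_\alpha(\xi,t)u_0(\xi)\,d\xi$, estimate the image series $\sum_m K_\alpha(\xi+2m,t)$ termwise by splitting indices according to whether $|\xi+2m|^2\ge t^\alpha$ or not, sum the far terms as a geometric series of stretched exponentials, and obtain $v(0,t)$ by integrating the bound \eqref{13} in time against $\|\widetilde g\|_{L^\infty}$; your only imprecision is in the bookkeeping you yourself flag --- the power $t^{\alpha/2}$ in the $\theta_\alpha$ bound comes from the \emph{number} of near indices, which grows like $t^{\alpha/2}$ as $t\to\infty$ (not from the far terms or from ``balancing''), the far geometric sum produces the $t^{\alpha^2/(2-\alpha)}$-type power, and $t^{3\alpha/2}$ only appears after the time integration in $v$. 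Where you genuinely diverge is the representation formula \eqref{equ-u}--\eqref{sol-v1}: the paper does not prove it at all but simply quotes Lemma 3.1 of Rundell--Xu--Zuo \cite{R13}, whereas you derive it by Laplace transform, solving the transformed two-point Neumann problem and recognizing
$\mathcal L\{\theta_\alpha(x,t);s\}=\tfrac12\, s^{\frac{\alpha}2-1}\cosh\bigl((1-x)s^{\frac{\alpha}2}\bigr)/\sinh\bigl(s^{\frac{\alpha}2}\bigr)$
as the Neumann Green's kernel. That computation is correct --- indeed it is the same resummation the paper itself performs later inside the proof of Lemma \ref{lem-Cauchy} --- so your route makes the lemma self-contained, at the price of having to justify that $\widetilde u$ is Laplace transformable and to invoke uniqueness of the Laplace transform. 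Your fallback direct verification is sound for the equation, the initial condition, and the Neumann condition at $x=0$ (oddness and $2$-periodicity of $\partial_x\theta_\alpha$), but, as you acknowledge, the trace identity $\partial_x v(1,t)=\widetilde g(t)$ is a genuine jump relation for a layer-type potential and would require a real argument if one avoids the Laplace picture; so if you keep the direct route you should either prove that jump relation or, like the paper, fall back on \cite{R13}.
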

\begin{proof}
The representation formula \eqref{equ-u} is directly derived from 
Lemma 3.1 in \cite{R13}. In order to finishing the proof of the lemma, 
it remains to show the estimates for $w$ and $v$. For this, from Lemma 
\ref{lem-theta}, we see that the Theta function $\theta_\alpha$ is even with 
respect to $x$. Thus
$$
w(0,t) =2 \int_0^1 \theta_\alpha(\xi,t) u_0(\xi)d\xi.
$$ 
We need to evaluate $\theta_\alpha(x,t)$ for $(x,t)\in (0,1)\times(0,\infty)$. 
In the case of $|x+2m|^2\ge t^\alpha$, from the estimate \eqref{esti-theta},
it follows that 
$$
|K_\alpha(x+2m,t)| \le 
Ct^{-\frac{\alpha}2}e^{-\sigma t^{-\frac{\alpha}{2-\alpha}}
|x+2m|^{\frac2{2-\alpha}} }.
$$
Moreover, if $m=0$, then we have
$$
|K_\alpha(x,t)| \le
\left\{
\begin{alignedat}{2}
&Ct^{-\frac{\alpha}2} e^{-\sigma t^{-\frac{\alpha}{2-\alpha}} x^{\frac2{2-\alpha}} }
\le Ct^{-\frac{\alpha}2} e^{-\sigma}
\le Ct^{-\frac{\alpha}2},&\quad& x^2\ge t^\alpha,
\\
&Ct^{-\frac{\alpha}2},&\quad& x^2\le t^\alpha.
\end{alignedat}
\right.
$$
For $m\ne0$, we have $|x+2m|\ge 2|m| - 1\ge 1$ by $0 < x < 1$, 
which further implies that 
$|x+2m|^{\frac2{2-\alpha}} \ge 2|m| - 1$,
because of $\frac2{2-\alpha} >1$. Consequently
$$
|K_\alpha(x+2m,t)| 
\le Ct^{-\frac{\alpha}2} e^{-\sigma t^{-\frac{\alpha}{2-\alpha}} (2|m|-1) },
\quad |x+2m|^2\ge t^\alpha,\ m\ne0.
$$
On the other hand, if $|x+2m|^2 < t^\alpha$, then the estimate 
\eqref{esti-theta<1} implies
$$
|K_\alpha(x+2m,t)| 
\le Ct^{-\frac{\alpha}2},\quad |x+2m|^2< t^\alpha.
$$
Collecting all the above estimates, we obtain 
\begin{align*}
|\theta_\alpha(x,t)|
\le& |K_\alpha(x,t)|
+ \left(\sum_{|x+2m|^2 \ge t^\alpha,m\ne0} 
+ \sum_{|x+2m|^2 < t^\alpha,m\ne0} \right)\vert K_\alpha(x+2m,t)\vert
\\
\le& Ct^{-\frac{\alpha}2} + \sum_{|x+2m|^2 \ge t^\alpha,m\ne0} 
Ct^{-\frac{\alpha}2} e^{-\sigma t^{-\frac{\alpha}{2-\alpha}} (2|m|-1) }
+ \sum_{|x+2m|^2 < t^\alpha} Ct^{-\frac{\alpha}2},
\end{align*}
where $(x,t)\in(0,1)\times(0,\infty)$. Again by noting that $|x+2m|
\ge 2|m| -1$, we see that $|x+2m|^2 < t^\alpha$ implies 
$2|m| \le 1+ t^{\frac{\alpha}2}$, so that
$$
\sum_{|x+2m|^2 < t^\alpha} Ct^{-\frac{\alpha}2}
= Ct^{-\frac{\alpha}{2}}\sum_{\vert x+2m\vert^2 < t^{\alpha}}
\le Ct^{-\frac{\alpha}2} (2+ t^{\frac{\alpha}{2}}) 
= C\left( t^{-\frac{\alpha}2} + t^{\frac{\alpha}2}\right).
$$
By direct calculations, we find
$$
\sum_{|x+2m|^2\ge t^\alpha,m\ne0} e^{-\sigma t^{-\frac{\alpha}{2-\alpha}} 
(2|m|-1)} 
\le 2\sum_{m=1}^{\infty}  e^{-\sigma t^{-\frac{\alpha}{2-\alpha}} (2m-1) } 
= \frac{2e^{\sigma t^{-\frac{\alpha}{2-\alpha}}}}
{e^{2\sigma t^{-\frac{\alpha}{2-\alpha}}}-1}.
$$
Moreover we can directly verify that
$$
\frac{2e^{\sigma t^{-\frac{\alpha}{2-\alpha}}}}
{e^{2\sigma t^{-\frac{\alpha}{2-\alpha}}} -1}
\le C t^{\frac{\alpha}{2-\alpha}},\quad t>0,
$$
which implies that
$$
\sum_{|x+2m|^2 \ge 
t^\alpha,m\ne0} Ct^{-\frac{\alpha}2} e^{-\sigma t^{-\frac{\alpha}{2-\alpha}} 
(2|m|-1) } 
\le Ct^{\frac{\alpha^2}{2-\alpha}},\quad t>0.
$$
Finally we obtain
\begin{align*}
|\theta_\alpha(x,t)|\le 
C\left(t^{-\frac{\alpha}2}+t^{\frac{\alpha}2}+t^{\frac{\alpha^2}{2-\alpha}}\right),
\quad t>0.
\end{align*}
Similarly
\begin{equation}\label{13}
|D_t^{1-\alpha} \theta_\alpha(x,t)|\le 
C\left( t^{\frac{\alpha}2-1} + t^{\frac{3\alpha}2-1} 
+t^{\frac{3\alpha-2}{2-\alpha} } \right),\quad t>0.
\end{equation}
Therefore
$$
|w(0,t)|\le 
2C\left( t^{-\frac{\alpha}2} + t^{\frac{\alpha}2} 
+ t^{\frac{\alpha^2}{2-\alpha} } \right) \int_0^1 |u_0(\xi)| d\xi
\le 2C\|u_0\|_{L^1(0,1)}\left(t^{-\frac{\alpha}2} + t^{\frac{\alpha}2} 
+ t^{\frac{\alpha^2}{2-\alpha} } \right).
$$ 
In view of the definition of $v$, by direct calculations and (\ref{13}), 
we arrive at the estimate for $v(0,t)$: 
$$
|v(0,t)| \le 
2\|\widetilde g\|_{L^\infty(0,\infty)}\int_0^t|D_t^{1-\alpha}
\theta_\alpha(-1,\tau)|d\tau 
$$
\begin{equation}\label{esti-v(0)}
\le 2\|g\|_{L^\infty(0,T)}\left( \frac2{\alpha} t^{\frac{\alpha}2 } 
+ \frac2{3\alpha} t^{\frac{3\alpha}2 } 
+ \frac{2-\alpha}{2\alpha}t^{\frac{2\alpha-2}{2-\alpha} } \right),\quad t>0.
\end{equation}
Collecting all the above estimates, we finally find that
$$
|\widetilde u(0,t)|\le 
C\left( t^{-\frac{\alpha}2} + t^{\frac{\alpha}2} + t^{\frac{3\alpha}2 } 
+ t^{\frac{\alpha^2}{2-\alpha} }  + t^{\frac{2\alpha-2}{2-\alpha} }\right),
\quad t>0.
$$
\end{proof}
We also need a classical result from the complex analysis:
\begin{lem}[Phragm\'en-Lindel\"of principle]
\label{lem-PL}
Let $F(z)$ be a holomorphic function in a sector 
$S = \{ z\in\mathbb C; \theta_1 < \mbox{arg}\, z < \theta_2 \}$ of angle 
$\pi/\beta=\theta_1-\theta_2$, and continuous on the closure $\ooo{S}$.
If
\begin{equation}
\label{est-bdd}
|F(z)|\leq 1
\end{equation}
for $z \in \ppp S$: the boundary of $S$, and 
$$
|F(z)|\leq Ce^{C|z|^\gamma}
$$
for all $z \in S$, where $0\le\gamma<\beta$ and $C>0$, then \eqref{est-bdd} 
holds 
also for all $z$ in $S$.
\end{lem}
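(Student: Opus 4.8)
The plan is to reduce the statement to the ordinary maximum modulus principle by damping the admissible growth of $F$ with an auxiliary exponential factor. First I would normalize the geometry: after a rotation $z\mapsto e^{i\phi_0}z$, which alters neither holomorphy nor any of the hypotheses, I may assume the sector is symmetric about the positive real axis, namely
\[
S=\Bigl\{z=re^{i\varphi}:r>0,\ |\varphi|<\tfrac{\pi}{2\beta}\Bigr\},
\]
so that its opening angle is the prescribed $\pi/\beta$.

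The key device is to choose an intermediate exponent $\beta'$ with $\gamma<\beta'<\beta$, which is possible precisely because $\gamma<\beta$, and to introduce for each $\ep>0$ the damped function
\[
F_\ep(z):=F(z)\,e^{-\ep z^{\beta'}},
\]
with $z^{\beta'}$ the principal branch, holomorphic on $S$. For $z=re^{i\varphi}\in\ooo{S}$ one has $\mathrm{Re}\,z^{\beta'}=r^{\beta'}\cos(\beta'\varphi)$, and since $|\beta'\varphi|\le \beta'\pi/(2\beta)<\pi/2$, the cosine is bounded below by $\delta:=\cos(\beta'\pi/(2\beta))>0$. Hence throughout $\ooo{S}$ one gets $|F_\ep(z)|\le C\exp\bigl(C r^\gamma-\ep\,\delta\,r^{\beta'}\bigr)$; because $\beta'>\gamma$ the decaying term dominates, so $|F_\ep(z)|\to 0$ as $|z|\to\infty$, uniformly in $\varphi$.

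Then I would apply the maximum modulus principle on the truncated sector $S_R:=S\cap\{|z|<R\}$. On the two straight edges, $\mathrm{Re}\,z^{\beta'}\ge 0$ forces $|e^{-\ep z^{\beta'}}|\le 1$, so the boundary hypothesis $|F|\le 1$ yields $|F_\ep|\le 1$ there; on the circular arc $|z|=R$, the uniform decay just established makes $|F_\ep|\le 1$ once $R$ is large enough. The maximum principle then gives $|F_\ep(z)|\le 1$ for all $z\in S_R$, hence for all $z\in S$ upon letting $R\to\infty$. Finally, fixing $z$ and letting $\ep\downarrow 0$ sends $e^{-\ep z^{\beta'}}\to 1$, so $|F(z)|\le 1$, which is exactly \eqref{est-bdd} extended to all of $S$.

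The only delicate point — and the crux of the argument — is the calibration of the damping exponent $\beta'$: it must exceed $\gamma$ so that $F_\ep$ genuinely decays at infinity, yet stay strictly below $\beta$ so that $\cos(\beta'\varphi)$ remains positive on the \emph{closed} sector, thereby preserving the boundary bound. Once $\beta'$ is fixed in this window, the remainder is the standard maximum-modulus-plus-limiting routine.
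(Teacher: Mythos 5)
Your proof is correct. The paper does not prove this lemma itself---it simply cites Stein and Shakarchi \cite{SS03}---and your argument (damping by $e^{-\varepsilon z^{\beta'}}$ with an intermediate exponent $\gamma<\beta'<\beta$ chosen so that $\cos(\beta'\varphi)$ stays positive on the closed sector, then the maximum modulus principle on truncated sectors, then letting $R\to\infty$ and $\varepsilon\downarrow 0$) is precisely the standard proof given in that reference.
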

The proof of the above lemma can be found in Stein and Shakarchi \cite{SS03}. 
Furthermore, Phragm\'en-Lindel\"of principle yields the following useful 
corollary: 
\begin{coro}
\label{coro}
Let $f$ be a real-valued continuous function on the interval $[0,1]$ and 
satisfy the following estimate 
$$
\left|\int_0^1 f(t) e^{st}dt\right|\le Ce^{as},\quad s\ge0,
$$
where $0<a<1$ and the constant $C$ is independent of $s$. Then $f$ is 
identically zero on $[a,1]$. 
\end{coro}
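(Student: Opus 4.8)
The plan is to realize the integral as the boundary behaviour of an entire function and then to read off the vanishing of $f$ from its growth. Set
\[
F(z) := \int_0^1 f(t)\, e^{zt}\, dt, \qquad z \in \mathbb{C},
\]
which is entire because $f$ is continuous on the compact interval $[0,1]$. Two elementary bounds are immediate: on the imaginary axis $|F(i\tau)| \le \int_0^1 |f(t)|\,dt = \|f\|_{L^1(0,1)}$, while the hypothesis gives $|F(s)| \le C e^{as}$ on the positive real axis. Moreover, for $\mathrm{Re}\, z \ge 0$ one has $|e^{zt}| = e^{t\,\mathrm{Re}\,z} \le e^{\mathrm{Re}\,z}$ for $t \in [0,1]$, so $|F(z)| \le \|f\|_{L^1(0,1)}\, e^{\mathrm{Re}\,z} \le \|f\|_{L^1(0,1)}\, e^{|z|}$; thus $F$ has exponential growth of order one in the right half-plane.

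Next I would feed these bounds into the Phragm\'en--Lindel\"of principle (Lemma \ref{lem-PL}). Put $G(z) := e^{-az} F(z)$; on the boundary of the first quadrant $S = \{0 < \arg z < \pi/2\}$ the estimates above give $|G(z)| \le K_0 := \max\{C, \|f\|_{L^1(0,1)}\}$, while inside $S$ the rough bound $|G(z)| = e^{-a\,\mathrm{Re}\,z}|F(z)| \le \|f\|_{L^1(0,1)}\, e^{(1-a)|z|}$ holds, of order $\gamma = 1$. Since $S$ is a sector of opening $\pi/2$, i.e.\ $\beta = 2 > \gamma$, Lemma \ref{lem-PL} applied to $G/K_0$ yields $|G(z)| \le K_0$ throughout $S$. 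Because $f$ is real-valued we have $F(\bar z) = \overline{F(z)}$, hence $|G(\bar z)| = |G(z)|$, and the same bound holds on the fourth quadrant; together with the boundary values this gives
\[
|F(z)| \le K_0\, e^{a\,\mathrm{Re}\,z}, \qquad \mathrm{Re}\,z \ge 0.
\]

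The decisive step is to shift the integration variable so that the cut-off point $t=a$ moves to the origin, and then to split off the part of $f$ living on $[a,1]$. Writing $s = t - a$ and $h(s) := f(s+a)$, the last display reads $|G(z)| \le K_0$ for $\mathrm{Re}\,z \ge 0$, where $G(z) = \int_{-a}^{1-a} h(s)\, e^{sz}\, ds$. Decompose $G = G_- + G_+$ with $G_\pm$ the integrals over $[-a,0]$ and $[0,1-a]$ respectively. For $\mathrm{Re}\, z \ge 0$ the integrand of $G_-$ satisfies $e^{s\,\mathrm{Re}\,z} \le 1$ (as $s \le 0$), so $G_-$ is bounded in the right half-plane, and hence so is $G_+ = G - G_-$. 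Symmetrically, for $\mathrm{Re}\, z \le 0$ the integrand of $G_+$ obeys $e^{s\,\mathrm{Re}\,z} \le 1$ (as $s \ge 0$), so $G_+$ is bounded in the left half-plane. Thus the entire function $G_+$ is bounded on all of $\mathbb{C}$, whence constant by Liouville's theorem; letting $\mathrm{Re}\,z \to -\infty$ along the real axis and using dominated convergence shows this constant is $0$, so $G_+ \equiv 0$.

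Finally, $G_+ \equiv 0$ means $\int_0^{1-a} h(s)\, s^n\, ds = 0$ for every $n \ge 0$ (differentiate the Taylor series at $z = 0$), so by the Weierstrass approximation theorem $\int_0^{1-a} h(s)\,\varphi(s)\, ds = 0$ for all continuous $\varphi$; taking $\varphi = h$ forces $h \equiv 0$ on $[0,1-a]$, that is, $f \equiv 0$ on $[a,1]$. I expect the main obstacle to be the passage from the single real-axis estimate to a genuine half-plane estimate: the raw growth of $F$ is of order one, exactly critical for a half-plane, which is precisely why one must work quadrant by quadrant (opening $\pi/2$, so $\beta = 2 > 1$) in order for Phragm\'en--Lindel\"of to apply. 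The subsequent decomposition-and-Liouville argument is the clean idea that converts this one-sided boundedness into the exact vanishing of $f$ on $[a,1]$.
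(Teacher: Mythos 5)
Your proposal is correct and follows essentially the same route as the paper's proof: a Phragm\'en--Lindel\"of argument on the two quadrants of opening $\pi/2$ (so $\beta = 2$ against growth of order $1$), a trivial bound on the portion of the integral coming from $[0,a]$ (equivalently $[-a,0]$ after the shift), and then Liouville's theorem together with the limit as $\mathrm{Re}\,z \to -\infty$ to kill the resulting bounded entire function. The only differences are cosmetic --- you weight by $e^{-az}$ and apply Phragm\'en--Lindel\"of \emph{before} splitting off the $[0,a]$ piece, whereas the paper splits and shifts first; and you make explicit the final implication $G_+ \equiv 0 \Rightarrow h \equiv 0$ (via vanishing moments and Weierstrass approximation), a step the paper leaves implicit.
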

\begin{proof}
By splitting $0\le t\le 1$ into the two parts $[0,a]$ and $[a, 1]$, 
we see that 
$$
\left|\int_a^1 f(t) e^{st}dt\right| = 
\left|\int_0^1 f(t) e^{st}dt - \int_0^a f(t) e^{st}dt\right|
\le (C + a\|f\|_{C[0,1]}) e^{as},\quad s\ge0. 
$$
After the change $\eta = t-a$ of variables, we arrive at 
$$
\left|\int_0^{1-a} f(t+a) e^{st}e^{as}dt\right| 
\le (C + a\|f\|_{C[0,1]}) e^{as},\quad s\ge0,
$$
hence that
$$
\left|\int_0^{1-a} f(t+a) e^{st}dt\right| \le C_1 ,\quad s\ge0,
$$
where $C_1:=C + a\|f\|_{C[0,1]}$.  Therefore our statement in this corollary 
is equivalent to the following: 
$$
\mbox{
If $G(z)=\int_0^b g(t)e^{tz}dt$ is bounded for $z=s\ge 0$, then $g\equiv 0$ in 
$[0,b]$.}
$$
Here $b:=1-a$ and $g(t):=f(t+a)$.

From the definition of the function $G$ and the above estimation, 
we see that $G$ is 
bounded on the imaginary axis and as well on $\{z\in\mathbb C; \arg z= 0\}$. 
Setting $\theta_1=0$ and $\theta_2=\frac{\pi}{2}$ in Lemma \ref{lem-PL} so that 
$\beta=2$, and noting the estimate 
$$
|G(z)|\le \int_0^b |g(t)| e^{t|z|}dt
\le b\|g\|_{L^\infty(0,b)} e^{b|z|},\quad 0<\arg z<\frac{\pi}2,
$$
we conclude from Lemma \ref{lem-PL} that $G$ must be bounded on the whole 
sector $\{z\in\mathbb C; 0<\arg z<\frac{\pi}{2}\}$. 
Similarly, $G$ must be bounded on the 
sector $\{z\in\mathbb C; -\frac{\pi}{2}<\arg z<0\}$. Thus $G$ is 
bounded on the right half plane.  Moreover we can directly see that 
$G(z)$ is bounded on the left half plane.   Thus, since $G$ is holomorphic 
on $\mathbb C$, iy follows that $G$ is a constant function, 
and the constant is zero, because 
$$
\lim_{z\to-\infty} \int_0^b g(t)e^{tz}\, dt = 0, 
$$
which implies our desired conclusion: $f\equiv0$ in $[a,1]$.
\end{proof}

Now we are ready to prove the uniqueness of solutions to the lateral 
Cauchy problem \eqref{equ-cauchy}. We have
\begin{lem}
\label{lem-Cauchy}
Let $T>0$ be a fixed constant and $u\in L^\infty(0,T;H^2(0,1))$ be a solution 
to the lateral Cauchy problem \eqref{equ-cauchy}. Then we have
$$
u(x,t)=0,\quad (x,t)\in[0,1]\times[0,T].
$$
\end{lem}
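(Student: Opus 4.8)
The plan is to compare the given $u$ with the solution $\widetilde u$ of the auxiliary problem \eqref{equ-IBVP}, whose trace at $x=0$ can be computed explicitly, and then to convert the vanishing of that trace into a Laplace-transform identity to which Corollary \ref{coro} applies. First I would set $u_0:=u(\cdot,0)$, $g:=u_x(1,\cdot)$, extend $g$ to $\widetilde g$ as in the construction preceding Lemma \ref{lem-u}, and let $\widetilde u$ solve \eqref{equ-IBVP}. On $(0,1)\times(0,T)$ both $u$ and $\widetilde u$ satisfy the same equation, the same initial condition $u_0$, and the same Neumann data ($u_x(0,\cdot)=\widetilde u_x(0,\cdot)=0$ and $u_x(1,\cdot)=g=\widetilde g=\widetilde u_x(1,\cdot)$ on $(0,T)$), so by uniqueness for the initial-boundary value problem (standard well-posedness, cf.\ \cite{SY11}) we get $u=\widetilde u$ there. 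The remaining lateral datum $u(0,\cdot)=0$ then forces $\widetilde u(0,t)=0$ for $t\in(0,T)$.

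The heart of the matter is to Laplace transform this trace. Starting from the representation \eqref{equ-u}--\eqref{sol-v1}, the transform \eqref{equ-Lap_K} of $K_\alpha$, and the fact (Lemma \ref{lem-theta}) that $\theta_\alpha(1,\cdot)$ and all its $t$-derivatives vanish at $0$ (so that $\mathcal L\{D_t^{1-\alpha}\theta_\alpha(1,\cdot);s\}=s^{1-\alpha}\mathcal L\{\theta_\alpha(1,\cdot);s\}$ with no boundary term), I would sum the geometric series in $m$ — legitimate by the exponential decay in $m$ from \eqref{esti-theta} — to obtain, with $p:=s^{\alpha/2}$, the closed forms
\[
\mathcal L\{\theta_\alpha(\xi,\cdot);s\}=\tfrac12 s^{\frac{\alpha}{2}-1}\,\frac{\cosh((1-\xi)p)}{\sinh p},\qquad \mathcal L\{\theta_\alpha(1,\cdot);s\}=\frac{s^{\frac{\alpha}{2}-1}}{2\sinh p}.
\]
Transforming $\widetilde u(0,\cdot)=w(0,\cdot)+v(0,\cdot)$ (the second being a convolution) and multiplying by $\sinh p$ yields the identity
\[
s^{\frac{\alpha}{2}-1}\!\int_0^1\!\cosh((1-\xi)p)\,u_0(\xi)\,d\xi+s^{-\frac{\alpha}{2}}\widehat{\widetilde g}(s)=\sinh(p)\,\mathcal L\{\widetilde u(0,\cdot);s\}.
\]
Since $\widetilde u(0,\cdot)=0$ on $(0,T)$, the right side equals $\sinh(p)\int_T^\infty\widetilde u(0,t)e^{-st}dt$, which by the growth bound of Lemma \ref{lem-u} is bounded by a power of $s$ times $e^{\,p-sT}$ and, because $\alpha/2<1$, tends to $0$ as $s\to\infty$.

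With this identity I would apply Corollary \ref{coro} twice. Writing $\cosh((1-\xi)p)=\tfrac12 e^{(1-\xi)p}+\tfrac12 e^{-(1-\xi)p}$ and isolating the growing part, all other terms (the $e^{-(1-\xi)p}$ piece, $s^{-\alpha/2}\widehat{\widetilde g}(s)$ with $\widehat{\widetilde g}$ bounded on $s\ge0$, and the negligible right side) are bounded by a fixed power of $s$; so after the substitution $\eta=1-\xi$ the function $f(\eta):=\tfrac12 u_0(1-\eta)$ satisfies $\big|\int_0^1 f(\eta)e^{\eta p}d\eta\big|\le Cp^{N}\le Ce^{ap}$ for \emph{every} $a>0$, and Corollary \ref{coro} gives $u_0\equiv0$ on $[0,1-a]$ for all $a\in(0,1)$, hence $u_0\equiv0$. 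Once $u_0=0$ the identity collapses to $\widehat{\widetilde g}(s)=s^{\alpha/2}\sinh(p)\,\mathcal L\{\widetilde u(0,\cdot);s\}$, whence $|\widehat{\widetilde g}(s)|\le C\,s^{\alpha/2}e^{\,p-sT}$ up to a power of $s$; the substitution $\tau=(T+1)(1-\sigma)$ with $\lambda:=(T+1)s$ then puts $h(\sigma):=\widetilde g((T+1)(1-\sigma))$ into the hypothesis of Corollary \ref{coro} with exponent $a=\tfrac{1}{T+1}+\ep<1$ (this is where $T>0$ is used), so $h$, and therefore $\widetilde g$, vanishes on $[0,T)$. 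Finally, with $u_0=0$ and $\widetilde g=0$ on $(0,T)$ the representation \eqref{equ-u} gives $\widetilde u\equiv0$ on $(0,1)\times(0,T)$, so $u=\widetilde u=0$ there, and the continuity afforded by $u\in L^\infty(0,T;H^2(0,1))$ with $\partial_t^\alpha u\in L^\infty(0,T;L^2(0,1))$ extends this to $[0,1]\times[0,T]$.

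The main obstacle will be the rigorous justification of the transform computation: interchanging the sum over $m$ with the $\xi$-integration and the Laplace transform, verifying that the boundary term in the Riemann--Liouville transform genuinely vanishes, and — most delicately — tracking the competing exponential orders $e^{p}$, $e^{-sT}$ and $e^{(1-\xi)p}$ so that, after multiplication by $\sinh p$, the data terms remain subexponential while the right side stays negligible. This balance, which rests entirely on $\alpha/2<1$, is exactly what makes the Phragm\'en--Lindel\"of input of Corollary \ref{coro} applicable.
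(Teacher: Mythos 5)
Your proposal is correct, and up to the conclusion $u_0\equiv0$ it is essentially the paper's own argument: the same extension $\widetilde u$, the same Theta-function representation and Laplace transform, the same closed-form summation of the geometric series (the paper writes the kernel as $s^{\frac{\alpha}{2}-1}\bigl(e^{xs^{\alpha/2}}+e^{(2-x)s^{\alpha/2}}\bigr)/\bigl(e^{2s^{\alpha/2}}-1\bigr)$ and solves \eqref{equ-Lap_h} for $\mathcal L\{\widetilde g\}$ instead of multiplying through by $\sinh p$, but this is the same identity), and the same first application of Corollary \ref{coro}; you are in fact more careful than the paper at this point, since the corollary only yields vanishing on $[a,1]$, and one must, as you do, use that the polynomial bound holds for \emph{every} $a\in(0,1)$ before letting $a\to0$. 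The genuine divergence is in eliminating $g$. The paper returns to the time domain: with $u_0\equiv0$, \eqref{equ-h} becomes the convolution equation $\int_0^t D_t^{1-\alpha}\theta_\alpha(1,\tau)\,g(t-\tau)\,d\tau=0$ on $(0,T)$, and the Titchmarsh convolution theorem together with the $t$-analyticity of $D_t^{1-\alpha}\theta_\alpha(1,\cdot)$ forces $g\equiv0$. You instead stay in the Laplace domain and reuse Corollary \ref{coro}: once $u_0=0$, the identity gives $|\widehat{\widetilde g}(s)|\le Cs^{N}e^{s^{\alpha/2}-sT}$, and your rescaling $t=(T+1)(1-\sigma)$, $\lambda=(T+1)s$ brings this under the corollary with exponent $a=\tfrac1{T+1}+\ep<1$, yielding $\widetilde g=0$ on $[0,\,T-\ep(T+1)]$ for every small $\ep>0$, hence on $[0,T)$; the bookkeeping is sound, since $\lambda-\lambda T/(T+1)=\lambda/(T+1)$ and the sublinear term $(\lambda/(T+1))^{\alpha/2}$ is absorbed into $\ep\lambda$, and this is indeed exactly where $T>0$ enters. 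What each route buys: yours is self-contained, with the single Phragm\'en--Lindel\"of corollary doing double duty, so you need neither Titchmarsh's theorem nor the analyticity of the Theta function; the paper's route avoids the delicate comparison of exponential orders and, Titchmarsh being an $L^1$ statement, is less demanding on the regularity of $g$ (your second application needs $\widetilde g$ continuous, or at least bounded, for Corollary \ref{coro}, though both proofs already assume this through Lemma \ref{lem-u}). Your closing step, reading $\widetilde u\equiv0$ directly off the representation \eqref{equ-u} once $u_0=0$ and $\widetilde g|_{(0,T)}=0$, also replaces the paper's appeal to uniqueness for \eqref{equ-IBVP} in the style of \cite{SY11}, and is if anything more direct.
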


Theorem 1.1 directly follows from Lemma 2.4.  Indeed, setting $I=(a,b)$ with 
$0 < a < b < 1$, by $u\vert_{I\times (0,T)} = 0$, we have
$u(a,\cdot) = u_x(a,\cdot) = 0$ and $u(b,\cdot) = u_x(b,\cdot) = 0$ in 
$(0,T)$.  Changing independent variables $x \to a-x$ and 
$x \to x-b$ in the intervals $(0,a)$ and $(b,1)$ respectively,
and applying Lemma 2.4, we obtain $u=0$ in $(0,a) \times (0,T)$ and 
$(b,1) \times (0,T)$.

Thus the rest of the paper is devoted to the proof of Lemma 2.4.

\begin{proof}
From the above calculation and settings, and noting Lemma \ref{lem-u}, we see 
that 
$\widetilde u$ is an extension of $u$ which solves the Cauchy problem 
\eqref{equ-cauchy}, that is, $\widetilde u=u$ in $[0,1]\times[0,T]$. Using the 
assumption that $u(0,t)=0$ for $t\in[0,T)$, we find
\begin{align}
\label{equ-h}
2\int_0^1 \theta_\alpha(\xi,t) u_0(\xi)d\xi 
+ 2\int_0^t D_t^{1-\alpha}\theta_\alpha(1,t-\tau)\widetilde g(\tau)d\tau
= \left\{
\begin{alignedat}{2}
&0, &\quad &t\in(0,T),\\
&\widetilde u(0,t), &\quad& t\in[T,\infty).
\end{alignedat}
\right.
\end{align}
Taking the Laplace transforms on both sides of the above equation, we have
\begin{align}
\label{equ-Lap_h}
2\int_0^1 \mathcal L\{\theta_\alpha(\xi,t);s\} u_0(\xi)d\xi 
+ 2\mathcal L\{ D_t^{1-\alpha}\theta_\alpha(1,t);s\} 
\mathcal L\{\widetilde g(t);s\}
=\int_T^\infty \widetilde u(0,t)e^{-st}dt.
\end{align}
We will show several useful estimates which mainly describe the rate of the 
convergence of the terms in \eqref{equ-Lap_h} as $s\to\infty$.

First, from the definition of the Theta function and the formula 
\eqref{equ-Lap_K}, it follows that
\begin{align*}
2\mathcal L\{\theta_\alpha(x,t); s\} &=
s^{\frac{\alpha}2-1}\sum_{m=-\infty}^\infty e^{-|x+2m|s^{\frac{\alpha}2}},
\quad s>0.
\end{align*}
For $x\in[0,1]$, we further treat the above identity as follows
\begin{align*}
2\mathcal L\{\theta_\alpha(x,t); s\} 
&=s^{\frac{\alpha}2-1} 
\left( e^{xs^{\frac{\alpha}2}} \sum_{m=-\infty}^{-1} e^{2m s^{\frac{\alpha}2}} 
+ e^{-xs^{\frac{\alpha}2}} \sum_{m=0}^\infty e^{-2m s^{\frac{\alpha}2}} \right)
\\
&=s^{\frac{\alpha}2-1} 
\left(e^{xs^{\frac{\alpha}2}} \sum_{m=1}^\infty e^{-2m s^{\frac{\alpha}2}}
+ e^{-xs^{\frac{\alpha}2}} \sum_{m=0}^\infty e^{-2m s^{\frac{\alpha}2}} \right)
\\
&=s^{\frac{\alpha}2-1}\frac{e^{xs^{\frac{\alpha}2}}}{
e^{2 s^{\frac{\alpha}2}}-1}
+s^{\frac{\alpha}2-1}\frac{e^{(2-x)s^{\frac{\alpha}2}}}
{e^{2 s^{\frac{\alpha}2}}-1},
\quad s>0.
\end{align*}
From Lemma \ref{lem-theta}, we have $\theta_\alpha(1,0)=0$. 
Then using the formula
$$
\mathcal L \{ D_t^{1-\alpha}\theta_\alpha(1,t); s \}
=s^{1-\alpha} \mathcal L \{ \theta_\alpha(1,t); s \},
$$
we are led to 
$$
\mathcal L\{D_t^{1-\alpha}\theta_\alpha(1,t); s\} 
= s^{-\frac{\alpha}2} \frac{e^{s^{\frac{\alpha}2}}}
{e^{2 s^{\frac{\alpha}2}}-1 }, 
\quad s>0.
$$
Solving \eqref{equ-Lap_h} with respect to $\mathcal L\{\widetilde{g}(t);s\}$
and substituting the above representations of 
$\mathcal L\{\theta_{\alpha}(\xi,t);s\}$ and 
$\mathcal L\{D_t^{1-\alpha}\theta_{\alpha}(1,t);s\}$,
we have
\begin{align*}
\mathcal L\{\widetilde g(t);s\} 
=&\frac12 s^{\frac{\alpha}2} \left(e^{s^{\frac{\alpha}2}} - 
e^{-s^{\frac{\alpha}{2}}} \right) \int_T^\infty \widetilde u(0,t) e^{-st}dt 
- \frac12 s^{\alpha-1} \int_0^1 e^{(\xi-1) s^{\frac{\alpha}2}} u_0(\xi)d\xi 
\\
&- \frac12 s^{\alpha-1} \int_0^1 e^{(1-\xi) s^{\frac{\alpha}2}} u_0(\xi)d\xi \\
=: &I_1(s) + I_2(s) - I_3(s),\quad s>0,
\end{align*}
that is,
\begin{align*}
& I_3(t) = \frac{1}{2}s^{\alpha-1}\int^1_0 e^{(1-\xi)s^{\frac{\alpha}{2}}}
u_0(\xi) d\xi\\
=& \frac{1}{2}s^{\frac{\alpha}{2}}(e^{s^{\frac{\alpha}{2}}}
- e^{-s^{\frac{\alpha}{2}}})\int^{\infty}_{T} \widetilde{u}(0,t)
e^{-\xi t} dt\\
- & \frac{1}{2}s^{\alpha-1}\int^1_0 e^{(\xi-1)s^{\frac{\alpha}{2}}}
u_0(\xi) d\xi - \mathcal L\{ \widetilde{g}(t);s\}
\end{align*}
\begin{equation}\label{equ-sLg}
=: I_1(s) + I_2(s) - \mathcal L\{ \widetilde{g}(t);s\}.
\end{equation}

From the choice of the extension $\widetilde g$ of the function $g$, 
we conclude that the left-hand side of the above equation can be rephrased 
as follows
$$
\mathcal L\{\widetilde g(t);s\} 
= \int_0^T g(t) e^{-st}dt + g(T)\int_T^{T+1} (T+1-t) e^{-st}dt,
$$
which implies that
$$
|\mathcal L\{\widetilde g(t);s\} |
\le \|g\|_{L^\infty(0,T)} \int_0^{T+1}  e^{-st}dt
= \frac{\|g\|_{L^\infty(0,T)} (1-e^{-s(T+1)}) }{s}
\le \|g\|_{L^\infty(0,T)} s^{-1},\quad s>0.
$$
Here we used that $g = u_x(1,\cdot) \in L^{\infty}(0,T)$ 
by $u_x\in L^{\infty}(0,T;H^1(0,1))$ and $H^1(0,1) \subset 
C[0,1]$.  Now by letting $s$ sufficiently large, we conclude that
\begin{equation}
\label{lim-sLg}
\mathcal L\{\widetilde g(t);s\} \to 0,\quad \mbox{as $s\to\infty$.}
\end{equation}
The final conclusion of Lemma \ref{lem-u} yields 
$|\widetilde u(0,t)|<Ce^{Mt}$ for $t\in [T,\infty)$, where $C,M>0$ are 
constants only depend on $\alpha$, $T$, $u_0$ and $g$, which implies that
$$
\left|\int_T^\infty \widetilde u(0,t) e^{-st}dt\right|
\le \int_T^\infty Ce^{(M-s)t}dt
=\frac{Ce^{MT}}{s-M} e^{-sT},\quad s>2M.
$$
By $\alpha \in (0,1)$, we can choose $C_1>0$ such that 
$$
\vert I_1(s)\vert \le \frac{Ce^{MT}}{M} s^{\frac{\alpha}{2}} 
e^{s^{\frac{\alpha}2}-sT}\le Ce^{-C_1s}, \quad s > 2M.
$$

For $I_2$, since $u_0:=u(\cdot,0)\in C[0,1]$, we 
have
\begin{align*}
|I_2(s)| 
\le&\frac12 s^{\alpha-1} \|u_0\|_{C[0,1]}\int_0^1 e^{(\xi-1) 
s^{\frac{\alpha}2}}d\xi\\
=&\frac12 \|u_0\|_{C[0,1]}s^{\frac{\alpha}2-1}(1 - e^{-s^\frac{\alpha}2})
\le \frac12 \|u_0\|_{C[0,1]}s^{\frac{\alpha}2-1},\quad s>0.
\end{align*}
Finally, noting the equality \eqref{equ-sLg}, from \eqref{lim-sLg} and the 
estimates for $I_j$, $j=1,2$, we obtain an estimate for $I_3(s)$:
$$
|I_3(s)| \le Ce^{-C_1s}
+ \|g\|_{L^\infty(0,T)} s^{-1} 
+ \frac12 \|u_0\|_{C[0,1]}s^{\frac{\alpha}2 -1}, \quad s>2M,
$$
where $C_2:=\sup_{s\ge0}|I_1(s)|<\infty$, which further implies
$$
\left|\int_0^1 e^{(1-\xi) s^{\frac{\alpha}2}} u_0(\xi)d\xi \right|
\le C_2(s^{-\frac{\alpha}{2}} + s^{-\alpha}), \quad s>2M.
$$
The change of variables implies 
$$
\int^1_0 e^{\eta z}u_0(1-\eta) d\eta
= \int^1_0 e^{(1-\xi)z}u_0(\xi) d\xi.
$$
Therefore, after the change of variable $z:=s^{\frac{\alpha}2}$, we find
$$
\left|\int_0^1 e^{\eta z} u_0(1-\eta)d\eta \right|
\le C_2(z^{-1} + z^{-2}),\quad z>(2M)^{\frac{\alpha}2}.
$$
For $0 < z \le (2M)^{\frac{\alpha}2}$, we have
$$
\left\vert \int^1_0 e^{\eta z}u_0(1-\eta) d\eta\right\vert
\le \Vert u_0\Vert_{C[0,1]}e^z
\le \Vert u_0\Vert_{C[0,1]}e^{(2M)^{\frac{\alpha}2}}.
$$
Therefore we can choose constants $C_3 >0$ and $a \in (0,1)$ such that 
$$
\left\vert \int^1_0 e^{\eta z}u_0(1-\eta) d\eta\right\vert
\le C_3e^{az}, \quad z > 0.
$$
Hence Corollary \ref{coro} yields that $u_0\equiv0$ in $[0,1]$. From 
\eqref{equ-h}, it follows that 
$$
\int_0^t D_t^{1-\alpha}\theta_\alpha(1,\tau) g(t-\tau)d\tau=0,\quad t\in(0,T).
$$

Therefore, the Titchmarsh convolution theorem (see Doss \cite{D88} and 
Titchmarsh \cite{T26}) implies the existence of $T_1,T_2\ge0$ satisfying 
$T_1+T_2\ge T$ such that $D_t^{1-\alpha}\theta_\alpha(1,t)=0$ for almost all 
$t\in(0,T_1)$ and $g(t) = 0$ for all $t\in (0,T_2)$. However, recalling the 
definition of Wright function, we see that the Theta function 
$\theta_\alpha(t)$ is analytic in $t\in(0,T)$, hence $D_t^{1-\alpha}\theta_
\alpha(t)$ is $t$-analytic. 
Thus $T_1$ must be zero, that is, $g\equiv0$ in $(0,T)$. 
Finally we can prove the uniqueness of the solution of the initial-boundary 
value problem \eqref{equ-IBVP} similarly to \cite{SY11}.
Although in \cite{SY11}, the Dirichlet boundary condition is considered 
but the case of the Neumann boundary condition is treated in the same way.
Thus $u\equiv0$ in $[0,1]\times(0,T)$. This completes the proof of the 
lemma.
\end{proof}

\section{Concluding remarks}
\label{sec-rem}
In this paper, we first investigated the lateral Cauchy problem for the $1$-D 
time-fractional diffusion equation. On the basis of the Theta function method, 
we gave a representation formula of the solution and showed the uniqueness of 
the solution to the Cauchy problem by the use of the Laplace transform 
argument. As a direct conclusion of the uniqueness of the Cauchy problem, 
we proved that the classical unique continuation is valid.  Let us mention 
that the proof of the unique continuation principle heavily relies on the 
Theta function method which enable one to derive an explicit representation 
formula of the solution. It would 
be interesting to investigate what happens about the unique continuation 
property of the solution in the general dimensional case.

\section*{Acknowledgment}
The author thanks Grant-in-Aid for Research Activity Start-up 16H06712, JSPS.
This work is supported by Grant-in-Aid for Scientific Research (S) 15H05740, 
the A3 Foresight Program "Modeling and Computation of Applied Inverse 
Problems" by Japan Society for the Promotion of Science.


\end{document}